%% file: main.tex
\documentclass[A4paper, 12pt]{article}
\usepackage[utf8]{inputenc}
\usepackage[OT2, T1]{fontenc}
\usepackage{cite}
\usepackage{amsmath}
\usepackage{amsfonts}
\usepackage{amssymb}
\usepackage{amsxtra}
\usepackage{mathrsfs}
\usepackage{tensor}
\usepackage[thmmarks, amsmath, thref, amsthm]{ntheorem}
\usepackage[pagebackref=true]{hyperref}

\usepackage{graphicx}
\usepackage[all]{xy}
\usepackage[portrait, top=2cm, bottom=2cm, left=2cm, right=2cm]{geometry}
\usepackage{enumerate}

\theoremstyle{plain}
\newtheorem{thm}{Theorem}
\newtheorem{lemm}[thm]{Lemma}
\newtheorem{prop}[thm]{Proposition}

\theoremstyle{definition}

\newtheorem{remk}[thm]{Remarks}

\newcommand{\Br}{\operatorname{Br}}
\newcommand{\Gal}{\operatorname{Gal}}
\newcommand{\Hom}{\operatorname{Hom}}
\newcommand{\Img}{\operatorname{Im}}
\newcommand{\Ind}{\operatorname{Ind}}
\newcommand{\Inv}{\operatorname{Inv}}
\newcommand{\inv}{\operatorname{inv}}
\newcommand{\Ker}{\operatorname{Ker}}
\newcommand{\nr}{\operatorname{nr}}
\newcommand{\SL}{\mathbf{SL}}

\newcommand{\abf}{\mathbf{a}}
\newcommand{\Fbf}{\mathbf{F}}
\newcommand{\fbf}{\mathbf{f}}
\newcommand{\Gbf}{\mathbf{G}}
\newcommand{\Ibf}{\mathbf{I}}
\newcommand{\Mbf}{\mathbf{M}}
\newcommand{\Qbf}{\mathbf{Q}}
\newcommand{\xbf}{\mathbf{x}}
\newcommand{\ybf}{\mathbf{y}}
\newcommand{\Zbf}{\mathbf{Z}}

\newcommand{\Acal}{\mathcal{A}}
\newcommand{\Bcal}{\mathcal{B}}

\renewcommand{\H}{\mathrm{H}}

\title{Insufficiency of the algebraic Brauer--Manin obstruction for homogeneous spaces}
\author{Nguy\~{\^{e}}n M\d{a}nh Linh}
\date{\today}

\begin{document}
\maketitle

\begin{abstract}
    Over any number field containing a root of unity of odd prime order, we construct a homogeneous space of $\mathrm{SL}_n$ with finite $2$-nilpotent geometric stabilizers, with a constant unramified algebraic Brauer group, which has no rational point but has local points in every completion of the ground field. This yields the first example of transcendental Brauer--Manin obstruction for homogeneous spaces of connected linear algebraic groups. Our method exploits a previous idea by Borovoi and Kunyavskii.
\end{abstract}

{\em Keywords.} Brauer--Manin obstruction, Galois cohomology, Hasse principle, homogeneous spaces, nonabelian cohomology, rational points.

{\em 2020 Mathematics subject classifications (MSC).} 11R34, 11S90, 14G12.

\tableofcontents

\input{section1}
\input{section2}
\input{section3}

{\em Acknowledgments.} I appreciate Cyril Demarche, David Harari, Giancarlo Lucchini Arteche, and Olivier Wittenberg for the interesting discussions and for their helpful comments and suggestions. This work was initiated during my visit to the Lodha Mathematical Sciences Institute during the Thematic Program on Rational Points, Algebraic Cycles, and the Local-Global Principle. I also acknowledge support from the Fondation Sciences Math{\'e}matiques de Paris for my postdoctoral research at the Institut de Math{\'ematiques} de Jussieu - Paris Rive Gauche, CNRS. I am grateful to both institutions for excellent working conditions.

\bibliographystyle{alpha}
\bibliography{ref}
\end{document}

%% file: section1.tex
\section{Introduction} \label{sec:Intro}

\subsection{Context} \label{subsec:Context}
A (smooth, geometrically integral) variety $X$ over a number $k$ (let us say, $k = \Qbf$) is said to be a {\em counterexample to the Hasse principle} if $X(k_v) \neq \varnothing$ for all place $v$ of $k$ but $X(k) = \varnothing$. The Hasse principle (or local--global principle) is known to hold for projective quadric hypersurfaces (Hasse--Minkowskii, see \cite[Theorem 27.3 (ii)]{Shimura2010Arithmetic}) and torsors under simply connected semisimple linear algebraic groups (Kneser--Harder--Chernousov, see \cite[Theorems 6.4 et 6.6]{PR1994Group}). Nevertheless, counterexamples over $k = \Qbf$, due to Selmer and Cassels--Guy already exist within the class of projective hypersurfaces such as $3x^3 + 4y^3 + 5z^3 = 0$ \cite{Selmer1951Diophantine} and $5x^3 + 9y^3 + 10z^3 + 12t^3 = 0$ \cite{CG1966Hasse}.

Systematic studies on the failure of the Hasse principle were extensively developed in the last century. Remarkably, Manin's foundational work \cite{Manin1971Brauer} introduced the use of cohomological methods, namely, the Brauer group and global class field theory. More precisely, assuming $X$ projective, the cohomological Brauer group $\Br(X):=\H^2_{\text{\'et}}(X,\Gbf_m)$ (which is a stable birational invariant) may explain why the Hasse principle fails, as follows. Assume $\prod_v X(k_v) \neq \varnothing$ and consider the pairing
    \begin{equation*}
        \Br(X) \times \prod_v X(k_v) \to \Qbf/\Zbf, \quad (\alpha, (P_v)) \mapsto \sum_v \inv_v(\alpha(P_v)),
    \end{equation*}
where $\inv_v: \Br(k_v) \hookrightarrow \Qbf/\Zbf$ are the invariant maps from local class field theory. By the local reciprocity law (Albert--Hasse--Brauer--Noether), the pairing vanishes on the constant subgroup 
    \begin{equation*}
        \Br_0(X):=\Img(\Br(k) \to \Br(X))
    \end{equation*}
and on the diagonal image of $X(k)$ in $\prod_v X(k_v)$. Therefore, if there is no family $(P_v)$ orthogonal to $\Br(X)$, then $X(k) = \varnothing$. In this case, the failure of the Hasse principle is said to be {\em explained by the Brauer--Manin obstruction}. If $X$ is not projective, we replace $\Br(X)$ by its {\em unramified part} $\Br_{\nr}(X) \subseteq \Br(X)$, {\em cf.} \cite[\S{6.2}]{CTS2021Brauer}.

In general, even this obstruction may fail to control the Hasse principle, {\em e.g.}, for Skorobogatov's bielliptic surface \cite{Skorobogatov1999Beyond}. A conjecture by Colliot-Th\'el\`ene \cite[Conjecture 14.1.2]{CTS2021Brauer} predicts the sufficiency of the Brauer--Manin obstruction for varieties whose geometry is tame (geometrically ``rationally connected''). These include geometrically unirational varieties such as homogeneous spaces of connected linear algebraic groups. This conjecture was established for principal homogeneous spaces by Sansuc \cite[Corollaire 8.7]{Sansuc1981Arithmetique}. More generally, Borovoi established the same conjecture for homogeneous spaces with connected stabilizers, or commutative stabilizers if the ambient group is simply connected semisimple \cite[Theorems 2.2 et 2.3]{Borovoi1996BrauerManin}. For finite nonabelian (geometric) stabilizers, Colliot-Th\'el\`ene's conjecture remains largely open (this is also the hardest case, by a reduction argument of Demarche--Lucchini Arteche \cite[Th\'eor\`eme 1.1]{DLA2019Reduction}).

As a matter of fact, Borovoi's theorem asserts that the {\em algebraic part} 
    \begin{equation*}
        \Br_{\nr,1}(X):=\Ker(\Br_{\nr}(X) \to \Br_{\nr}(X_{\bar{k}}))
    \end{equation*}
of the unramified Brauer group is sufficient to control the Hasse principle on homogeneous spaces with connected or commutative stabilizers. In fact, the abelian nature of the stabilizers enables the machinery of arithmetic duality theorems {\em \`a la} Poitou--Tate. For finite noncommutative stabilizers, one expects the intervention of transcendental classes. Regarding a closely-related property of the Hasse principle, the so-called {\em weak approximation}, the insufficiency of the algebraic Brauer--Manin obstruction was established for finite constant metabelian stabilizers, independently by Demarche--Lucchini Arteche--Neftin \cite[Corollary 5.2, Example 5.3]{DLAN2017Grundwald}, Demeio \cite[Theorem 1.5]{Demeio2026Descent}, and Lagarde \cite[Theorem 1.5]{Lagarde2025Brauer}. As for the Hasse principle, the existence of such homogeneous spaces are unknown so far. This is the aim of the present article. Here is our main result.

\begin{thm} \label{thm:Main}
    Let $p$ be an odd prime and $k$ a number field containing a primitive $p$-th root of unity. Then, there exist an integer $n \ge 1$ and a homogeneous space $X$ of $\SL_n$ over $k$ satisfying the following conditions.
    \begin{enumerate}
        \item The geometric stabilizers of $X$ are finite $p$-groups of nilpotency class $2$.

        \item The unramified algebraic Brauer group of $X$ is $\Br_{\nr,1}(X) = \Br_0(X)$. 

        \item $X(k_v) \neq \varnothing$ for all places $v$ of $k$.

        \item $X(k) = \varnothing$.
    \end{enumerate}
\end{thm}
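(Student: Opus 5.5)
We follow the strategy of Borovoi--Kunyavskii: realize $X$ as $\SL_n/G$ twisted by a Galois cocycle, so that rational points of $X$ correspond to liftings of a class in $\H^1$ of a finite nonabelian group. Concretely, we fix a finite $p$-group $G$ of nilpotency class $2$, given as a central extension
\begin{equation*}
    1 \to A \to G \to B \to 1
\end{equation*}
with $A,B$ elementary abelian, for instance a Heisenberg-type group or a suitable quotient of a free $2$-step nilpotent group of exponent $p$. Because $\mu_p \subseteq k$, the $k$-forms of $G$ can be chosen with Galois action through $\mu_p$-twists of $A$, so that the relevant $\H^2(k,A)$ is a sum of copies of $\Br(k)[p]$. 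We embed $G \hookrightarrow \SL_n$ and twist by a cocycle $\sigma \in Z^1(k, \SL_n)$ (possible since $\H^1(k,\SL_n)=1$); this gives a homogeneous space $X$ of $\SL_n$ whose geometric stabilizer is a $k$-form $F$ of $G$. Condition (1) then holds by construction.

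For (3) and (4) we use the standard dictionary: $X(k)\neq\varnothing$ if and only if a certain class $\eta$ (equivalently a $k$-kernel / gerbe) is neutral, and in our setting this amounts to the existence of a lift of a given $\beta \in \H^1(k,B)$ to $\H^1(k,F)$. The obstruction to lifting is the connecting class $\delta(\beta) \in \H^2(k,A)$, together with the freedom of twisting $A$ by $\H^1$ classes. We choose $\beta$, given by a tuple of elements of $k^\times/k^{\times p}$ (Kummer theory), so that $\delta(\beta)$ is a sum of cup products (cyclic algebras) $(a_i,b_j)$ that is nonzero globally but locally trivial at every place after a suitable local twist. Locality is then handled place by place via local class field theory; globally nontriviality comes from a choice of elements $a_i,b_j$ in the spirit of the Borovoi--Kunyavskii example, via the Albert--Brauer--Hasse--Noether sequence and a Grunwald--Wang type choice of auxiliary primes. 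The point is that the local lifts must be twisted differently at different places, and no global twist reconciles them; this is a nonabelian Sha-type phenomenon of $F$.

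For (2), we compute $\Br_{\nr,1}(X)/\Br_0(X)$ using the known formula (Harari, Demarche, Lucchini Arteche) for homogeneous spaces of $\SL_n$ with finite stabilizer: it is isomorphic to a subgroup of $\H^1(k,\hat F)$, where $\hat F = \Hom(F^{\mathrm{ab}},\Gbf_m)$, cut out by an unramified condition: elements locally trivial when restricted to all cyclic subgroups (bicyclic condition) at every place. We design $G$ and the Galois action so that this group vanishes, e.g.\ by making the action on $F^{\mathrm{ab}}$ such that $\H^1$ restricted to cyclic subgroups is injective and the corresponding local conditions force triviality via Chebotarev.

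The main obstacle is reconciling (2) with (4): we need the obstruction in $\H^2(k,A)$ to be invisible to algebraic Brauer classes, which see only $F^{\mathrm{ab}}$, while still being global. We would first try to make $A=[G,G]$ (so $A$ dies in $F^{\mathrm{ab}}$) and verify by an explicit cohomology computation that the lifting obstruction is not detected by any element of $\H^1(k,\hat F)$.
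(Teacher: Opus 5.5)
Your proposal has a genuine gap: it gives no mechanism for (4), and (4) is the entire content of the theorem. The sentence ``no global twist reconciles them'' only restates what has to be proved, and you never fix the group or the class.

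The choice of group is decisive, not cosmetic. Take a Heisenberg-type group (one-dimensional center, nondegenerate commutator pairing). Then every class in $\H^2(k,\Zbf/p)$ is a symbol $a\cup b$: over a number field every class of order $p$ is represented by a cyclic algebra of degree $p$, and this is a symbol algebra because $\mu_p\subseteq k$. So such an $X$ always has rational points. For the closely related Borovoi--Kunyavskii data of \cite{Linh2024BK}, the Hasse principle is even a theorem.

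Your dictionary is also off in two places.
\begin{enumerate}
\item Twisting $\SL_n/G$ by $\sigma\in Z^1(k,\SL_n)$ produces nothing new, because $\H^1(k,\SL_n)=1$. One needs the Demarche--Lucchini Arteche realization of the extension of $\Gal(\bar k/k)$ by $F$ defined by a class $\beta\in\H^2(k,Z)$.
\item The rational-point criterion is not the lifting of a \emph{fixed} class of $\H^1(k,B)$. The obstruction to that lift lives in $\H^2(k,A)$ with $A$ constant, where $\H^2(k,A)\to\prod_v\H^2(k_v,A)$ is injective, so no counterexample can arise that way. The correct criterion is whether the fixed $\beta$ lies in the image of the quadratic connecting map, i.e.\ whether $\beta=\phi_\ast(x\cup y)$ for some $x,y\in\H^1(k,M)$.
\end{enumerate}

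The missing idea is a rank bound.
\begin{enumerate}
\item Take $M=\Fbf_p^d$ with $d=[k:\Qbf]+3$, $Z=\Mbf_d(\Fbf_p)/\langle\Ibf_d\rangle$, and $\phi$ the projection.
\item Choose $\beta$ with local invariants $\phi(e_i\otimes e_i)$ at $d$ places $v_i\nmid p$ and $0$ elsewhere. This is allowed because $\sum_i\phi(e_i\otimes e_i)=\phi(\Ibf_d)=0$.
\item Locally, each target has rank one. It is realized by $x=a_ie_i$, $y=b_ie_i$ with $\inv_{v_i}(a_i\cup b_i)=1$.
\item Globally, for any $x\cup y$ the local invariant matrix $A_v=[\inv_v(x_i\cup y_j)]$ factors through $\H^1(k_v,\Fbf_p)$. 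Hence its rank is at most $\dim\H^1(k_v,\Fbf_p)$, which equals $2$ at places not above $p$ and is at most $[k:\Qbf]+2<d$ above $p$.
\item Since $\beta$ determines $A_v$ only modulo $\Fbf_p\Ibf_d$, this rank bound forces $A_{v_i}=e_i\otimes e_i$ and $A_v=0$ elsewhere.
\item Therefore $\sum_vA_v=\Ibf_d\neq0$, contradicting reciprocity for the constant module $M\otimes M$.
\end{enumerate}

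Finally, once $F$ is constant, (2) is not the obstacle you describe. $\Br_{\nr,1}(X)/\Br_0(X)$ is the group of classes in $\H^1(k,\Fbf_p^{2d})$ that are locally trivial at almost all places, and this group vanishes by Chebotarev.
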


\subsection{Organization of the manuscript} \label{subsec:Idea}

Let us explain the idea of construction of the homogeneous space in Theorem \ref{thm:Main}, which is the content of Section \ref{sec:Construction}. In \cite{BK1997Hasse}, Borovoi and Kunyavskii introduced a general machinery to construct finite $k$-group schemes $F$ of nilpotency class $2$. This was studier further by \cite{Linh2024BK}, where the author established the Hasse principle and weak approximation for a subclass of homogeneous spaces with geometric stabilizers $F$. Here is the idea (see the detail in \cite[\S{3}]{Linh2024BK}). Let $M$ and $Z$ be finite Galois modules and
    \begin{equation*}
        \phi: M \otimes M \to Z
    \end{equation*}
a (Galois equivariant) homomorphism. We assume $\phi$ is surjective and {\em nondegenerate}, that is
    \begin{enumerate}
        \item If $x \in M$ such that $\phi(x \otimes y) = 0$ for all $y \in M$, then $x = 0$.

        \item  If $y \in M$ such that $\phi(x \otimes y) = 0$ for all $x \in M$, then $y = 0$.
    \end{enumerate}
From $\phi$, one constructs a central extension $F$ of $M \oplus M$ by $Z$ with $Z(F) = [F,F] = Z$. For each $\beta \in \H^2(k,Z)$, the construction of Demarche and Lucchini Arteche \cite[Corollaires 3.3 and 3.5]{DLA2019Reduction} yields a homogeneous space $X = X_\beta$ of $\SL_n$ (for some $n \ge 1$) with geometric stabilizers $F$. Furthermore, for any overfield $K \supseteq k$, one has $X(K) \neq \varnothing$ if and only if there exist $x,y \in \H^1(K,M)$ with $\beta|_K = \phi_\ast(x \cup y)$ (see \cite[Proposition 3.3, Lemme 3.8]{Linh2024BK}). 

In \cite[\S{5B}]{Linh2024BK}, our choice of $(M,Z,\phi)$ is as follows. Let $A$ be a finite constant abelian group of exponent $e$ such that $k$ contains a primitive $e$-th root of unity. Let $M:=\Ind_L^k(A)$, let $Z$ be the cokernel of the canonical inclusion $A \otimes A \hookrightarrow M \otimes M$, and $\phi: M \otimes M \to Z$ the canonical projection. Using the Poitou--Tate exact sequence in Galois cohomology, the Hasse principle was established for the obtained homogeneous spaces \cite[Th\'eor\`eme 5.6]{Linh2024BK}. Quite surprisingly, a slight modification of this construction yields the counterexample in Theorem \ref{thm:Main}. Namely, we take $A = \Fbf_p$ and $M = \Fbf_p^d$ for some sufficiently large integer $d$. We now consider a different kind of inclusion
    \begin{equation*}
        A \otimes A = \Fbf_p \hookrightarrow M \otimes M = \Mbf_{d}(\Fbf_p), \quad \lambda \mapsto \lambda \Ibf_d,
    \end{equation*}
where $\Ibf_d$ denotes the identity matrix. Again, we take $Z$ to be the cokernel of this inclusion and $\phi$ the canonical projection. The obtained stabilizer $F$ then has order $p^{d^2 + 2d - 1}$. The minimal numerical example yielded from this construction is $p = 3$ and $d=p+2 = 5$ over the Eisenstein cyclotomic field $k = \Qbf(\zeta_3)$, where $F$ has order $3^{34} = 16677181699666569$.

Next, decompose $\Ibf_d$ into a sum of rank one matrices 
    \begin{equation*}
        \Ibf_d = \sum_{i=1}^d E_i
    \end{equation*}
and then choose distinct finite places $v_1,\ldots,v_d$. Identify $\H^2(k_{v_i}, M \otimes M)$ to $M \otimes M$ and  $\H^2(k_{v_i},Z)$ to $Z$ via the local invariants (see \eqref{eq:LocalInvariantA} below). Global class field theory then produces a class $\beta \in \H^2(k,Z)$ that vanishes outside $\{v_1,\ldots,v_d\}$ and satisfies $\beta|_{k_{v_i}} = \phi(E_i)$ for $i = 1,\ldots,d$. Since each local invariant $\beta|_{k_v}$ has rank $\le 1$, it is easy to produce local classes $x_v,y_v \in \H^1(k_v,M)$ with $\phi_\ast(x_v \cup y_v) = \beta|_{k_v}$ (Lemma \ref{lemm:LocalPoints}). 

To show that $\beta$ is not globally of the form $\beta = \phi_\ast(x \cup y)$, we consider the local invariant matrices $A_v = (x \cup y)|_{k_v} \in \H^2(k_v, M \otimes M) \cong M \otimes M$. Then $\phi(A_{v_i}) = \phi(E_i)$ for $i = 1,\ldots,d$ and $\phi(A_v) = 0$ for $v \notin \{v_1,\ldots,v_d\}$. The point here is that the dimension of the $\Fbf_p$-vector space $\H^1(k_v,\Fbf_p)$ is bounded above by a constant independent of $v$. This yields a uniform bound on the rank of $A_v$ which forces $A_{v_i} = E_i$ and $A_v = 0$ for $v \notin \{v_1,\ldots,v_d\}$ once $d$ is large enough. We then derive a contradiction using the global reciprocity law (Proposition \ref{prop:NoRationalPoints}) for the finite constant Galois module $M \otimes M$. The homogeneous space $X$ is therefore a counterexample to the Hasse principle.

In Section \ref{sec:Obstruction}, we show that the above counterexample cannot be explained by the algebraic Brauer--Manin obstruction. Computations of the unramified algebraic Brauer group for our construction was done in \cite[\S{4C}]{Linh2024BK} using Demarche's formula \cite[Th\'eor\`eme 1]{Demarche2010Brauer}. This ensures $\Br_{\nr,1}(X) = \Br_0(X)$, which contributes nothing to the Brauer--Manin pairing. Now, the stabilizers of our homogeneous space is {\em supersolvable} as finite groups equipped with an outer Galois action (in fact, this outer action is {\em trivial}). Thanks to the sophisticated result of Harpaz--Wittenberg \cite[Th\'eor\`eme B]{HW2020Galois}, the full Brauer--Manin obstruction suffices to explain the failure of the Hasse principle on $X$. This already proves the existence of a transcendental Brauer--Manin obstruction. As a matter of fact, an application of Bogomolov's formula \cite{Bogomolov1988Brauer}, as in \cite[\S{4D}]{Linh2024BK}, shows that the transcendental Brauer group $\Br_{\nr}(X)$ is $\Zbf/p$. 

To produce explicit unramified transcendental Brauer classes that give a nonvanishing Brauer--Manin condition, we start by construction a geometrically unramified class $\bar{\Acal} \in \Br_{\nr}(X_{\bar{k}})$ using the description of the Bogomolov multiplier in Subsection \ref{subsec:UnramifiedClass}. Next, we descend this class into a class $\Acal \in \Br(X)$ and investigate its evaluation at local points in Subsection \ref{subsec:Evaluation}. These procedure can be done by attaching such a class $\Acal = \Acal_L$ to {\em any} functional $L: M \otimes M \to \Fbf_p$. We do not prove that $\Acal \in \Br_{\nr}(X)$ algebraically, instead we apply Harari's criterion \cite{Harari1994Fibration} to do so. A suitable choice of $L$ then gives a $\bar{\Acal} \neq 0$, that is, $\Acal$ is transcendental (Subsection \ref{subsec:TransBrauerExplicit}).

%% file: section2.tex
\section{Construction of the counterexample} \label{sec:Construction}

We fix an odd prime $p$ and a number field $k$ containing a primitive $p$-th root of unity (in particular, $k$ is totally imaginary). Let $\Omega$ denote the set of places of $k$. We identify the constant Galois modules $\Fbf_p = \Zbf/p$ and $\mu_p$ by fixing a generator $\zeta_p \in \mu_p$ once and for all.

In what follows, we make use of the following fact. Let $A$ be  finite constant $p$-elementary abelian group ({\em i.e.} a finite-dimensional $\Fbf_p$-vector space equipped with the trivial Galois action) and $\hat{A}:=\Hom(A,\mu_p) = \Hom(A,\Fbf_p)$ its Cartier dual (which is again a finite constant $p$-elementary abelian group). Tate's local duality asserts that the pairing
    \begin{equation*}
        \H^2(k_v,A) \times \hat{A} \xrightarrow{\cup} \H^2(k_v,\Fbf_p) \xrightarrow{\inv_v} \Zbf/p
    \end{equation*}
is a perfect pairing of finite groups, that is, one has an isomorphism
    \begin{equation} \label{eq:LocalInvariantA}
        \Inv_v^A: \H^2(k_v,A) \xrightarrow{\cong} \Hom(\hat{A},\Zbf/p) = A
    \end{equation}
which is natural in the sense that for any homomorphism $f: A \to B$ of finite constant $p$-elementary abelian groups, one has a commutative diagram
    \begin{equation} \label{eq:LocalInvariantFunctorial}
        \xymatrix{
            \H^2(k_v, A) \ar[d]^{f_\ast} \ar[rr]^-{\Inv_v^{A}} && A \ar[d]^{f} \\
            \H^2(k_v, B) \ar[rr]^-{\Inv_v^B} && B.
        }
    \end{equation}
We now apply Poitou--Tate duality \cite[Th\'eor\`eme 17.13]{Harari2017Cohomologie} to $A$ and $\hat{A}$. The map
    \begin{equation*}
        \H^1(k,\hat{A}) \to \prod_{v \in \Omega}\H^1(k_v,\hat{A})
    \end{equation*}
is injective by virtue of Chebotarev's density theorem. Dually, the map 
    \begin{equation*}
        \H^2(k,A) \to \prod_{v \in \Omega}\H^2(k_v,A)
    \end{equation*}
is injective as well. In view of \eqref{eq:LocalInvariantA} and the identification $\Hom(\H^0(k,\hat{A}),\Qbf/\Zbf) = A$, the last three terms of the nine-term Poitou--Tate exact sequence then yield a short exact sequence  
    \begin{equation} \label{eq:ABHNForA}
        0 \to \H^2(k,A) \xrightarrow{(\Inv^A_v)_v} \bigoplus_{\substack{v \in \Omega \\ v \text{ finite}}} A \xrightarrow{\Sigma} A \to 0
    \end{equation}
(note that $k$ has no real place).

\subsection{The numerical data} \label{subsec:ConstructionData}

Let us follow the construction in \cite[\S{3}]{Linh2024BK}.
\begin{itemize}
     \item Let $d:=[k:\Qbf] + 3$ and $M := \Fbf_p^{d}$ with standard basis $e_1,\ldots,e_{d}$. Identify $M \otimes M$ to $\Mbf_{d}(\Fbf_p)$ in the obvious way, that is, if $x = (x_1,\ldots,x_d)$ and $y = (y_1,\ldots,y_d)$, then
        \begin{equation*}
            x \otimes y = \begin{bmatrix} x_1 \\ \vdots \\ x_d\end{bmatrix} \begin{bmatrix} y_1 & \cdots & y_d\end{bmatrix} = \begin{bmatrix} x_1y_1 & \cdots & x_1y_d \\ \vdots & \ddots & \vdots\\ x_d y_1 & \cdots & x_d y_d\end{bmatrix}.
        \end{equation*}

    \item Let $Z:=\Mbf_{d}(\Fbf_p)/\langle \Ibf_{d} \rangle$ and let $\phi: M \otimes M \to Z$ be the canonical projection.

    \item Define the finite constant group scheme $F$ as an extension of $M \oplus M$ by $Z$ as follows. As a set, put $F := Z \times (M \oplus M)$ and equip it with the group law given by
        \begin{equation} \label{eq:GroupLawOnF}
            (z,x,y)(z',x',y'):=(z + z' + \phi(x \otimes y'), x+x', y+y')
        \end{equation}
    for all $x,y \in M$ and $z \in Z$.
\end{itemize}

Hence, $F$ is a $p$-group of nilpotency class $2$ and order $p^{[k:\Qbf]^2 + 8[k:\Qbf]+14}$.

\begin{lemm} \label{lemm:CenterAndDerivedSubgroup}
    $Z$ is equal to the center and also the commutator subgroup of $F$.
\end{lemm}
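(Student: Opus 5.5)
We compute commutators directly from the group law \eqref{eq:GroupLawOnF}. Identify $Z$ with the subgroup $\{(z,0,0)\}$ of $F$. The identity is $(0,0,0)$. A short check gives the inverse $(z,x,y)^{-1} = (-z + \phi(x\otimes y), -x, -y)$. From this one finds
\begin{equation*}
    [(z,x,y),(z',x',y')] = \bigl(\phi(x\otimes y') - \phi(x'\otimes y), 0, 0\bigr),
\end{equation*}
for example by comparing $gh$ and $hg$: the two products differ exactly by the central element $(\phi(x\otimes y')-\phi(x'\otimes y),0,0)$. The first step is simply to verify these formulas by expanding the group law.

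For the center, elements of the form $(z,0,0)$ commute with everything by the formula above, so $Z \subseteq Z(F)$. Conversely, let $(z,x,y)$ be central. Taking $(z',x',y') = (0,0,y')$, we need $\phi(x\otimes y') = 0$ for all $y'\in M$. Taking $(0,x',0)$, we need $\phi(x'\otimes y)=0$ for all $x'\in M$. So the main point is the nondegeneracy of $\phi$. If $x \neq 0$, the matrices $x\otimes y'$ for $y'\in M$ form a $d$-dimensional subspace of $\Mbf_d(\Fbf_p)$ consisting of matrices of rank $\le 1$. Since $d\ge 2$, this subspace is not contained in $\langle \Ibf_d\rangle$: the identity has rank $d\ge 2$, so the only rank $\le 1$ element of $\langle\Ibf_d\rangle$ is $0$, while $x\otimes y'\neq 0$ for $y'\neq0$. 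Hence some $\phi(x\otimes y')\neq 0$, which is a contradiction, so $x=0$. Symmetrically $y=0$, and therefore $Z(F)=Z$.

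For the commutator subgroup, the formula shows $[F,F]\subseteq Z$. Conversely, taking $y=0$ and $x'=0$ gives commutators $(\phi(x\otimes y'),0,0)$. The elementary matrices $e_i\otimes e_j$ span $\Mbf_d(\Fbf_p)$, and $\phi$ is surjective, so these commutators generate $Z$. Hence $[F,F]=Z$. The only mildly delicate step is the center computation, which relies on $d\ge 2$. This holds since $d=[k:\Qbf]+3\ge 4$.
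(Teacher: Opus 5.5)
Your proof is correct and rests on the same key observation as the paper: a nonzero matrix $x\otimes y$ has rank $1$, so it cannot lie in $\langle \Ibf_d\rangle$ when $d\ge 2$, and this gives the nondegeneracy of $\phi$. The only difference is presentational. You derive the commutator formula $[(z,x,y),(z',x',y')]=\phi(x\otimes y')-\phi(x'\otimes y)$ directly and deduce both the center and the commutator subgroup from it. The paper instead cites \cite[Lemmes 3.5 and 3.6]{Linh2024BK} for these general facts: surjectivity of $\phi$ gives $[F,F]=Z$, and nondegeneracy gives $Z(F)=Z$.
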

\begin{proof}
    The commutator subgroup of $F$ is precisely $Z$ by virtue of \cite[Lemme 3.5]{Linh2024BK}. Now, assume $x \in M$ such that $\phi(x \otimes y) = 0$ for all $y \in M$, or
        \begin{equation*}
            x \otimes y = \lambda \Ibf_{d}
        \end{equation*}
    for some $\lambda \in \Fbf_p$ {\em a priori} depending on $y$. Since the left hand side has rank $\le 1$, one necessarily has $\lambda = 0$ and hence $x \otimes y = 0$ for all $y \in M$. This forces $x = 0$. Similarly, if $y \in M$ is such that $x \otimes y = 0$ for all $x \in M$, then $y = 0$. By \cite[Lemme 3.6]{Linh2024BK}, the center of $F$ is exactly $Z$.
\end{proof}

The next step is to decompose the identity matrix $\Ibf_{d}$ into rank one matrices, say
    \begin{equation*}
        \Ibf_d = \sum_{i=1}^d e_i \otimes e_i
    \end{equation*}
In particular, one has 
    \begin{equation} \label{eq:SumOfPhiNi}
        \sum_{i=1}^{d} \phi(e_i \otimes e_i) = \phi(\Ibf_{d}) = 0.
    \end{equation}

\subsection{Existence of local points} \label{subsec:ConstructionLocalPoints}

We now construct our homogeneous space. Keep the notation from the previous subsection. Choose $d$ pairwise distinct finite places $v_1,\ldots,v_d \in \Omega$, none of which lies over the rational prime $p$ of $\Qbf$. Recall from \eqref{eq:LocalInvariantA} that we have an isomorphism $\Inv_{v_i}^Z: \H^2(k_{v_i},Z) \xrightarrow{\cong} Z$ for $1 \le i \le d$. In view of \eqref{eq:ABHNForA} and \eqref{eq:SumOfPhiNi}, there exists $\beta \in \H^2(k,Z)$ such that $\beta|_{k_v} = 0 \in \H^2(k_v,Z)$ for $v \neq v_1,\ldots,v_d$ and
    \begin{align*}
        \Inv_{v_i}^Z(\beta) = \phi(e_i \otimes e_i), \quad i = 1,\ldots,d.
    \end{align*}
As explained in \cite[Proposition 3.3, Lemme 3.8]{Linh2024BK}, using the machinery of nonabelian second Galois cohomology, the construction of Demarche and Lucchini Arteche \cite[Corollaires 3.3 and 3.5]{DLA2019Reduction} yields a homogeneous space $X = X_\beta$ of $\SL_n$ (for some integer $n \ge 1$) with geometric stabilizer $F$. For any overfield $K \supseteq k$, one has $X(K) \neq \varnothing$ if and only if there exist $x,y \in \H^1(K,M)$ with $\beta|_K = \phi_\ast(x \cup y)$ in $\H^2(K,Z)$.

\begin{lemm} \label{lemm:LocalPoints}
    We have $\prod_{v \in \Omega} X(k_v) \neq \varnothing$.
\end{lemm}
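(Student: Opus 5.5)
Using the criterion recalled just before the statement, it suffices to produce, for every place $v \in \Omega$, classes $x_v, y_v \in \H^1(k_v,M)$ with $\phi_\ast(x_v \cup y_v) = \beta|_{k_v}$ in $\H^2(k_v,Z)$. We split into two cases according to whether $v \in \{v_1,\ldots,v_d\}$.

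\emph{Case $v \notin \{v_1,\ldots,v_d\}$.} Here $\beta|_{k_v} = 0$ by construction, so we take $x_v = y_v = 0$. Then $\phi_\ast(x_v \cup y_v) = 0 = \beta|_{k_v}$. This covers all infinite places, since $k$ is totally imaginary, as well as all finite places outside the chosen set.

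\emph{Case $v = v_i$.} We have $\Inv_{v_i}^Z(\beta) = \phi(e_i \otimes e_i)$. We look for classes supported on the $i$-th coordinate, namely
\[
x_v = a\, e_i, \qquad y_v = b\, e_i, \qquad a, b \in \H^1(k_v,\Fbf_p),
\]
where $a e_i$ denotes the image of $a$ under the map $\Fbf_p \to M$, $\lambda \mapsto \lambda e_i$. By bilinearity and naturality of the cup product,
\[
x_v \cup y_v = \iota_*(a \cup b) \in \H^2(k_v, M \otimes M),
\]
where $\iota\colon \Fbf_p \to M \otimes M$ is $\lambda \mapsto \lambda\, e_i \otimes e_i$. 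Applying the naturality \eqref{eq:LocalInvariantFunctorial} first to $\iota$ and then to $\phi$ gives
\[
\Inv_v^Z\bigl(\phi_\ast(x_v \cup y_v)\bigr) = \inv_v(a \cup b)\cdot \phi(e_i \otimes e_i).
\]
This uses that $\Inv_v^{\Fbf_p}$ is $\inv_v$ under the identification $\Fbf_p = \mu_p$ fixed at the start. Since $\Inv_v^Z$ is an isomorphism, it therefore suffices to find $a, b \in \H^1(k_v,\Fbf_p)$ with $\inv_v(a \cup b) = 1$.

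The only real input, and the main obstacle, is the existence of such $a$ and $b$. This follows from Tate local duality. Because $\mu_p \subset k_v$, the cup product pairing
\[
\H^1(k_v,\Fbf_p) \times \H^1(k_v,\mu_p) \to \H^2(k_v,\mu_p) \cong \Zbf/p
\]
is perfect. The space $\H^1(k_v,\Fbf_p)$ is nonzero, for instance because it contains the unramified class, and under $\Fbf_p \cong \mu_p$ the pairing becomes a nondegenerate pairing of $\H^1(k_v,\Fbf_p)$ with itself. Concretely, this is the Hilbert symbol on $k_v^\times/k_v^{\times p}$, which is nondegenerate. Hence some pair $(a,b)$ pairs nontrivially, and rescaling $a$ by an element of $\Fbf_p^\times$ gives invariant exactly $1$. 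Combining the two cases yields a family of local points, so $\prod_{v\in\Omega} X(k_v) \neq \varnothing$.
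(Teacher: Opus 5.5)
Your proof is correct and matches the paper's argument: you take trivial classes at places outside $\{v_1,\ldots,v_d\}$, and at $v_i$ the classes $a e_i$, $b e_i$ with $\inv_{v_i}(a\cup b)=1$ obtained from Tate local duality, transported to $Z$ through the naturality of $\Inv_v$. The only cosmetic difference is that you justify $\H^1(k_{v_i},\Fbf_p)\neq 0$ using the unramified class, whereas the paper uses the count $\#\H^1(k_{v_i},\Fbf_p)\ge p^2$.
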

\begin{proof}
    Let $v \in \Omega$. We need to construct cohomology classes $x_v,y_v \in \H^2(k_v,M)$ with $\beta|_{k_v} = \phi_\ast(x_v \cup y_v)$. If $v \neq v_1,\ldots,v_d$ then $\beta|_{k_v} = 0$, so we can simply take $x_v = y_v = 0$. Therefore, we assume $v = v_i$ for some $1 \le i \le d$, so 
        \begin{equation*}
            \Inv_{v_i}^Z(\beta) = \phi(e_i \otimes e_i),
        \end{equation*}
    where $e_1,\ldots,e_d$ is the standard basis for $M = \Fbf_p^d$. By Tate's local duality, the pairing
        \begin{equation*}
            \H^1(k_{v_i}, \Fbf_p) \times \H^1(k_{v_i}, \Fbf_p) \xrightarrow{\cup} \H^2(k_{v_i}, \Fbf_p) \cong \tensor[_p]{\Br}{}(k_{v_i}) \xrightarrow{\inv_{v_i}} \Zbf/p \subseteq \Qbf/\Zbf
        \end{equation*}
    is a perfect pairing of finite groups. Also note that $\H^1(k_{v_i},\Fbf_p)$ is nontrivial because its cardinality is at least $p^2$ by virtue of \cite[Chapter VII, Proposition 6.8]{Milne2020CFT}. In particular, there are classes $a_i,b_i \in \H^1(k_{v_i},\Fbf_p)$ such that
        \begin{equation*}
            \inv_{v_i}(a_i \cup b_i) = 1 \in \Zbf/p.
        \end{equation*}
    Then, under the identification $\H^1(k_v,M) = \H^1(k_v,\Fbf_p)^d$, the classes
        \begin{equation*}
            x_{v_i}:=e_i \cup a_i = (0,\ldots,a_i,\ldots,0) \quad \text{and} \quad y_{v_i}:=e_i \cup b_i = (0,\ldots,b_i,\ldots,0) 
        \end{equation*}
    enjoy the property that
        \begin{equation*}
            \Inv_{v_i}^{M \otimes M}(x_{v_i} \cup y_{v_i}) = e_i \otimes e_i
        \end{equation*}
    (recall that the isomorphism $\Inv_{v_i}^{M \otimes M}: \H^2(k_{v_i}, M \otimes M) \xrightarrow{\cong} M \otimes M$ was defined from \eqref{eq:LocalInvariantA}). It follows from the functoriality of the mentioned isomorphism, {\em i.e.} diagram \eqref{eq:LocalInvariantFunctorial}, that
        \begin{equation*}
            \Inv_{v_i}^Z(\phi_\ast(x_{v_i} \cup y_{v_i})) = \phi(\Inv_{v_i}^{M \otimes M}(x_{v_i} \cup y_{v_i})) = \phi(e_i \otimes e_i) = \Inv_{v_i}^Z(\beta),
        \end{equation*}
    which implies $\beta|_{k_{v_i}} = \phi_\ast(x_{v_i} \cup y_{v_i})$.
\end{proof}

\subsection{Absence of rational points} \label{subsec:ConstructionRationalPoints}

We now prove that the constructed homogeneous space $X$ has no $k$-rational points. The idea is to exploit the uniform bound on the $p$-rank of the local cohomology groups $\H^1(k_v,\Fbf_p)$.

\begin{prop} \label{prop:NoRationalPoints}
    The homogeneous space $X$ constructed from Subsection \ref{subsec:ConstructionLocalPoints} is a counterexample to the Hasse principle, that is, $X(k) = \varnothing$.
\end{prop}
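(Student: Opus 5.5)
Argue by contradiction. Suppose $X(k) \neq \varnothing$. By the criterion recalled in Subsection \ref{subsec:ConstructionLocalPoints}, there are $x,y \in \H^1(k,M)$ with $\beta = \phi_\ast(x \cup y)$ in $\H^2(k,Z)$. Put $\gamma := x \cup y \in \H^2(k, M \otimes M)$. For each finite place $v$, let
\begin{equation*}
    A_v := \Inv_v^{M \otimes M}(\gamma|_{k_v}) \in \Mbf_d(\Fbf_p).
\end{equation*}
The plan is to pin down every $A_v$ using rank considerations, and then contradict the reciprocity law \eqref{eq:ABHNForA} applied to the constant module $A = M \otimes M$.

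\textbf{Step 1 (local invariants modulo scalars).} By the functoriality \eqref{eq:LocalInvariantFunctorial}, $\phi(A_v) = \Inv_v^Z(\beta|_{k_v})$. This equals $\phi(e_i \otimes e_i)$ if $v = v_i$, and $0$ otherwise. Since $\Ker\phi = \langle \Ibf_d \rangle$, there are scalars $\lambda_v \in \Fbf_p$ with
\begin{equation*}
    A_{v_i} = e_i \otimes e_i + \lambda_{v_i}\Ibf_d \quad\text{and}\quad A_v = \lambda_v \Ibf_d \ \text{ for } v \notin \{v_1,\ldots,v_d\}.
\end{equation*}

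\textbf{Step 2 (rank bound).} Write $x|_{k_v} = (x_1,\ldots,x_d)$ and $y|_{k_v} = (y_1,\ldots,y_d)$ with $x_j, y_l \in \H^1(k_v,\Fbf_p)$. The $(j,l)$-entry of $A_v$ is then $\inv_v(x_j \cup y_l)$. Choose a basis of $\H^1(k_v,\Fbf_p)$, of dimension $r_v$, and express the $x_j$ and $y_l$ as the columns of $r_v \times d$ matrices $X$ and $Y$. Then
\begin{equation*}
    A_v = X^{T} P Y,
\end{equation*}
where $P$ is the Gram matrix of the cup-product pairing. Hence $\operatorname{rank} A_v \le r_v$.

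Since $\mu_p \subset k_v$, the local Euler characteristic formula together with local duality gives $r_v = 2$ if $v \nmid p$, and $r_v = [k_v:\Qbf_p] + 2 \le [k:\Qbf] + 2 = d-1$ if $v \mid p$. In all cases $\operatorname{rank} A_v \le d-1$.

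\textbf{Step 3 (forcing the scalars to vanish).}
For $v \notin \{v_1,\ldots,v_d\}$: if $\lambda_v \ne 0$, then $\lambda_v \Ibf_d$ has rank $d$, contradicting Step 2. So $A_v = 0$.

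For $v = v_i$: this is the delicate point, because the general bound $d-1$ alone would allow $\lambda_{v_i} = -1$, since $\Ibf_d - e_i \otimes e_i$ has rank $d-1$. This is where the choice of $v_i \nmid p$ is used. It gives $\operatorname{rank} A_{v_i} \le 2$. On the other hand, $e_i \otimes e_i + \lambda \Ibf_d$ has rank $d$ when $\lambda \notin \{0,-1\}$, and rank $d-1 \ge [k:\Qbf] + 2 \ge 4 > 2$ when $\lambda = -1$. Hence $\lambda_{v_i} = 0$, so $A_{v_i} = e_i \otimes e_i$.

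\textbf{Step 4 (reciprocity).} Since $\gamma$ is a global class, the exact sequence \eqref{eq:ABHNForA} for $A = M \otimes M$ yields $\sum_v A_v = 0$. By Step 3 the left-hand side equals
\begin{equation*}
    \sum_{i=1}^d e_i \otimes e_i = \Ibf_d \neq 0,
\end{equation*}
a contradiction. Therefore $X(k) = \varnothing$. Combined with Lemma \ref{lemm:LocalPoints}, $X$ is a counterexample to the Hasse principle.

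The main obstacle is Step 3 at the places $v_i$, namely ruling out $\lambda_{v_i} = -1$. It is resolved by the hypothesis that $v_i \nmid p$, which gives the sharp bound $\operatorname{rank} A_{v_i} \le 2$.
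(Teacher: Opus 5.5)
Your proposal is correct and follows the paper's proof essentially step by step. You bound $\operatorname{rank} A_v \le r_v$ by factoring through the Gram matrix of the cup-product pairing, use $r_{v_i}=2$ (since $v_i \nmid p$) and $r_v \le d-1$ elsewhere to kill the multiples of $\Ibf_d$, and contradict \eqref{eq:ABHNForA} via $\sum_v A_v = \Ibf_d \neq 0$; your separate treatment of $\lambda_{v_i}=-1$ just spells out the paper's observation that $e_i \otimes e_i + \lambda \Ibf_d$ has rank at least $d-1>2$ whenever $\lambda \neq 0$.
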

\begin{proof}
    Since $k$ contains $\mu_p$, it follows from \cite[Chapter VII, Proposition 6.8]{Milne2020CFT} that
        \begin{equation*}
            \# \H^1(k_v,\Fbf_p) = \# (k_v^\times / k_v^{\times p}) = p^2 |p|_{k_v}^{-1}
        \end{equation*}
    for any finite place $v \in \Omega$. Put $r_v:=\dim_{\Fbf_p} \H^1(k_v,\Fbf_p)$. If $v$ lies over $p$ then 
        \begin{equation*}
            \# \H^1(k_v,\Fbf_p) \le p^2 p^{[k_v:\Qbf_p]} \le p^{2 + [k_v,\Qbf_p]}
        \end{equation*}
    Since $d = [k:\Qbf] + 3$ by construction, one has $r_v < d$. If $v$ does not lie over $p$, then $r_v = 2$.

    Assume, by contradiction, that $X(k) \neq \varnothing$. As discussed in the previous subsection, there are $x,y \in \H^1(k,M)$ such that $\beta = \phi_\ast(x \cup y)$. Now, for each finite place $v \in \Omega$, consider the matrix
        \begin{equation*}
            A_v:=\Inv_v^{M \otimes M}(x \cup y).
        \end{equation*}
    Explicitly, if we identify $\H^1(k_v,M)$ to $\H^1(k_v,\Fbf_p)^{d}$ and write $x = (x_1,\ldots,x_{d})$, $y = (y_1,\ldots,y_{d})$, then $A_v = [\inv_v(x_i \cup y_j)]_{i,j=1}^{d}$. By choosing an $\Fbf_p$-basis $\{w_1,\ldots,w_{r_v}\}$ for $\H^1(k_v,\Fbf_p)$, one sees that $A_v$ decomposes as
        \begin{equation*}
            A_v = B_v^T [\inv_v(w_i \cup w_j)]_{i,j=1}^{r_v} C_v,
        \end{equation*}
    where the columns of the respective matrices $B_v,C_v \in \Mbf_{r_v \times d}(\Fbf_p)$ are the coordinates of $x_i$ and $y_i$ in the basis $\{w_1,\ldots,w_r\}$. In particular, the rank of $A_v$ is at most $r_v$. Consider the following cases.
    
    \begin{enumerate}
        \item If $v = v_i$ for some $1 \le i \le d$, then $r_v = 2$ because $v_i$ does not lie over $p$ by construction. Since
            \begin{equation*}
                \phi(A_{v_i}) = \phi(\Inv_{v_i}^{M \otimes M}(x \cup y)) = \Inv_{v_i}^Z(\phi_\ast(x \cup y)) = \Inv_{v_i}^Z(\beta) = \phi(e_i \otimes e_i),
            \end{equation*}
       we have $A_{v_i} = e_i \otimes e_i + \lambda \Ibf_{d}$ for some $\lambda \in \Fbf_p$. If $\lambda \neq 0$, then $A_{v_i}$ is diagonal with $(d-1)$ entries $\lambda$, so it has rank $\ge d - 1 > 2$, which is impossible. Hence $\lambda = 0$ and thus $A_{v_i} = e_i \otimes e_i$.

        \item If $v \neq v_1,\ldots,v_r$, then the same calculation leads to $\phi(A_{v}) = \Inv_{v}^Z(\beta) = 0$, so $A_{v} = \lambda \Ibf_{d}$ for some $\lambda \in \Fbf_p$. Now, $A_v$ has rank $\le r_v < d$, which forces $\lambda = 0$, so that $A_v = 0$.
    \end{enumerate}
    Recall that $k$ is is totally imaginary. In view of \eqref{eq:SumOfPhiNi}, one has
        \begin{equation*}
            \sum_{\substack{v \in \Omega \\ v \text{ finite}}} \Inv_v^{M \otimes M} (x \cup y) = \sum_{i=1}^d A_{v_i} = \sum_{i=1}^d (e_i \otimes e_i) = \Ibf_{d} \neq 0,
        \end{equation*}
    But this contradicts the exactness of \eqref{eq:ABHNForA}. This contradiction shows that $X(k) = \varnothing$.
\end{proof}



%% file: section3.tex
\section{Transcendental Brauer--Manin obstruction} \label{sec:Obstruction}

We now show that the failure of the Hasse principle for the homogeneous space $X$ from Section \ref{sec:Construction} cannot be explained by the algebraic part of the Brauer--Manin obstruction. Calculation of the unramified algebraic Brauer group $\Br_{\nr,1}(X)$ modulo its constant part $\Br_0(X)$ of the constructed homogeneous space $X$ was done in \cite[Proposition 4.6]{Linh2024BK}. We can apply this result because our group $F$ is finite constant. Since the Cartier dual $\widehat{M \oplus M}:=\Hom(M \oplus M, \mu_p) \cong \Fbf_p^{2d}$ is again finite constant, the cited proposition gives
    \begin{equation*}
        \Br_{\nr,1}(X)/\Br_0(X) = \{\alpha \in \H^1(k,\Fbf_p^{2d}): \alpha|_{k_v} = 0 \text{ for all but finitely many } v \in \Omega\}.
    \end{equation*}
By Chebotarev's density theorem, the right hand side vanishes, so $\Br_{\nr,1}(X) = \Br_0(X)$ does not contribute to the Brauer--Manin pairing. This concludes the proof of Theorem \ref{thm:Main} on the insufficiency of the algebraic Brauer--Manin obstruction on $X$.

As explained at the end of Subsection \ref{subsec:Idea}, Th\'eor\`eme B of \cite{HW2020Galois} implies the existence of a transcendental Brauer--Manin obstruction on $X$. In particular, we have $\Br_{\nr}(X) \supsetneq \Br_0(X)$. According to \cite[Proposition 4.9]{Linh2024BK}, the unramified geometric Brauer group $\Br_{\nr}(X_{\bar{k}})$ ({\em i.e.}, the Bogomolov multiplier of $F$) is given by
    \begin{equation*}
        \Br_{\nr}(X_{\bar{k}}) = B_0(F) = \Hom(\Ker(\phi)/H, \Qbf/\Zbf),
    \end{equation*}
where $H \subseteq M \otimes M$ is generated by rank one matrices $x \otimes y$ with $\phi(x \otimes y) = 0$. As in the proof of Lemma \ref{lemm:LocalPoints}, this implies $x \otimes y = 0$. Therefore, $H = 0$ and $\Br_{\nr}(X_{\bar{k}}) = \Hom(\Ker(\phi),\Qbf/\Zbf) = \Hom(\Fbf_p, \Qbf/\Zbf) = \Zbf/p$. It follows that 
    \begin{equation*}
        \Br_{\nr}(X)/\Br_0(X) = \Br_{\nr}(X)/\Br_{\nr,1}(X)  \cong \Zbf/p,
    \end{equation*}
because the first term has been shown above to be nontrivial.

\subsection{Producing geometrically unramified classes} \label{subsec:UnramifiedClass}

Our next task is to describe an explicit transcendental Brauer class $\Acal \in \Br_{\nr}(X)$ which obstructs the Brauer--Manin condition. First, recall a description of $\Br(X)$ which is valid for any homogeneous space with finite geometric stabilizers. Choose a geometric point $\bar{P} \in X(\bar{k})$. Then $\pi_1^{\text{\'et}}(X_{\bar{k}}, \bar{P}) = F$ and $\Pi_X:=\pi_1^{\text{\'et}}(X,\bar{P})$ fits in an exact sequence \cite{Grothendieck71}
    \begin{equation*}
        1 \to F \to \Pi_X \to \Gal(\bar{k}/k) \to 1.
    \end{equation*}
Furthermore, by construction, the above extension induces the trivial outer action of $\Gal(\bar{k}/k)$ on $F$. More explicitly, according to \cite[p. 1577]{DLA2019Reduction}, if
    \begin{equation*}
        b: \Gal(\bar{k}/k) \times \Gal(\bar{k}/k) \to Z, \quad (s,t) \mapsto b_{s,t}
    \end{equation*}
is a {\em normalized} $2$-cocycle representing $\beta$, then there is an identification $\Pi_X = F \times \Gal(\bar{k}/k)$ (of compact Hausdorff totally disconnected topological spaces) such that the group law on $\Pi_X$ is given by
    \begin{equation} \label{eq:GroupLawOnPiX}
        (f,s)(f',t):=(ff' b_{s,t}^{-1}, st).
    \end{equation}
According to \cite[Proposition 3.2]{LA2019Brauer}, the full Brauer group of $X$ is
    \begin{equation*}
        \Br(X) \cong \H^2(\Pi_X, \Qbf/\Zbf(1)),
    \end{equation*}
where the \'etale fundamental group $\Pi_X$ acts on the Galois module $\Qbf/\Zbf(1):=\varinjlim_n \mu_n$ via its quotient $\Gal(\bar{k}/k)$. An element of $\Br(X)$ therefore can be represented by a cocycle
    \begin{equation*}
        \Pi_X \times \Pi_X \to \Qbf/\Zbf(1).
    \end{equation*}
But since $\Pi_X$ acts trivially on the submodule $\mu_p \cong \Fbf_p$ of $\Qbf/\Zbf$, and we already knew that $\Br_{\nr}(X)/
\Br_0(X) \cong \Zbf/p$, we shall look for unramified Brauer classes represented by cocycles
    \begin{equation*}
        \Acal: \Pi_X \times \Pi_X \to \Fbf_p.
    \end{equation*}
Here is our idea, the class $[\Acal]$ must remain unramified when restricted to 
    \begin{equation*}
        \Br(X_{\bar{k}}) \cong \H^2(F,\Qbf/\Zbf),
    \end{equation*}
that is, pulling $[\Acal]$ back to $F$ yields an element of the Bogomolov multiplier $B_0(F)$. So we first focus on this restriction. By Lemma 3.5 in \cite{Bogomolov1988Brauer}, any element of $B_0(F)$ is inflated from the abelianization of $F$, {\em i.e.}, the group $F/Z = M \oplus M$. Let us now follow the proof of \cite[Lemme 4.8 and Proposition 4.9]{Linh2024BK} very closely. We have a well-known isomorphism
    \begin{equation*}
        \H^2(M \oplus M, \Qbf/\Zbf) \xrightarrow{\cong} \Hom\left(\bigwedge^2(M \oplus M), \Qbf/\Zbf\right).
    \end{equation*}
Any bilinear map $\Psi: (M \oplus M) \times (M \oplus M) \to \Qbf/\Zbf$ is a normalized $2$-cocycle, and the above isomorphism associates its class to the ``commutator map'', which can be explicitly computed as
    \begin{equation*}
        \tilde{\Psi}: \bigwedge^2(M \oplus M) \to \Qbf/\Zbf, \quad a \wedge a' \mapsto \Psi(a,a') - \Psi(a',a).
    \end{equation*}
On the other hand, the fact that $F$ is of nilpotency class $2$ with center $Z$ and abelianization $M \oplus M$ gives us another (surjective) commutator map
    \begin{equation} \label{eq:CommutatorMapLambda}
        \lambda: \bigwedge^2(M \oplus M) \to Z, \quad (x,y) \wedge (x', y') \mapsto \phi(x \otimes y') - \phi(x' \otimes y).
    \end{equation}
Under the usual the usual identification
    \begin{equation} \label{eq:IdentificationOfLambda2M}
        \bigwedge^2(M \oplus M) = (\bigwedge^2 M) \oplus (M \otimes M) \oplus (\bigwedge^2 M), \quad (x,y) \wedge (x',y') = (x \wedge x', x \otimes y', y \wedge y'),
    \end{equation}
we have $\Ker(\lambda) = (\bigwedge^2 M) \oplus \Ker(\phi) \oplus (\bigwedge^2 M)$, and its subgroup generated by simple wedges is $S_\lambda:=(\bigwedge^2 M) \oplus \{0\} \oplus (\bigwedge^2 M)$. Then, the inflation to $F$ of $[\Psi] \in \H^2(M \oplus M, \Qbf/\Zbf)$ lands in $B_0(F)$ precisely when $\tilde{\Psi}|_{S_\lambda} = 0$.

One way to produce such a bilinear map $\Psi$ is to start with any linear form $L: M \otimes M \to \Fbf_p$ and then take
    \begin{equation*}
        \Psi_L: (M \oplus M) \times (M \oplus M) \to \Fbf_p, \quad ((x,y),(x',y')) \mapsto L(x \otimes y').
    \end{equation*}
Under the identification \eqref{eq:IdentificationOfLambda2M}, we have
    \begin{align*}
        \tilde{\Psi}_L (x \wedge x', 0 ,0) & =  \tilde{\Psi}_L((x, 0) \wedge (x',0)) = \Psi_L((x,0),(x',0)) - \Psi_L((x',0),(x,0)) \\
        & = L(x \otimes 0 - 0 \otimes x') = 0
    \end{align*}
for all $x,x' \in M$. Similarly, $\tilde{\Psi}_L(0,0,y \wedge y') = 0$ for all $y,y' \in M$. Therefore, $\tilde{\Psi}_L|_{S_\lambda} = 0$ and hence the the inflation 
    \begin{equation*}
        \bar{\Acal}_L: F \times F \to \Fbf_p
    \end{equation*}
of $\Psi_L$ to $F$ indeed represents an unramified class in $\H^2(F,\Qbf/\Zbf) = \Br(X_{\bar{k}})$.

\subsection{Evaluation at local points} \label{subsec:Evaluation}

The next step is to lift the geometric Brauer class $[\bar{\Acal}_L]$ from the previous subsection to an unramified class in $\Br(X) \cong \H^2(\Pi_X, \Qbf/\Zbf(1))$ represented by a cocycle
    \begin{equation*}
        \Acal_L: \Pi_X \times \Pi_X \to \Fbf_p \cong \mu_p.
    \end{equation*}
To this end, we make use of the fact that $\bar{\Acal}_L$ is inflated from the bilinear form 
    \begin{equation*}
        \Psi_L: (M \oplus M) \times (M \oplus M) \to \Fbf_p, \quad ((x,y),(x',y')) \mapsto L(x \otimes y').
    \end{equation*}
    
\begin{lemm} \label{lemm:TheHomomorphismQ}
    The projection $F \to M \oplus M$ lifts to a (continuous) homomorphism $q: \Pi_X \to M \oplus M$.
\end{lemm}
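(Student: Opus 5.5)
The plan is to write $q$ down explicitly, using the description \eqref{eq:GroupLawOnPiX} of $\Pi_X$ as the set $F \times \Gal(\bar{k}/k)$ with the twisted group law. Let $\pi: F \to M \oplus M$, $(z,x,y) \mapsto (x,y)$, be the canonical projection. By \eqref{eq:GroupLawOnF} it is a group homomorphism with kernel $Z$. I would define
\begin{equation*}
    q: \Pi_X \to M \oplus M, \quad (f,s) \mapsto \pi(f).
\end{equation*}

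\textbf{Homomorphism.} For $(f,s),(f',t) \in \Pi_X$, the law \eqref{eq:GroupLawOnPiX} gives
\begin{equation*}
    q((f,s)(f',t)) = \pi(ff'b_{s,t}^{-1}) = \pi(f)+\pi(f')-\pi(b_{s,t}).
\end{equation*}
The cocycle $b$ takes values in $Z = \Ker(\pi)$, so $\pi(b_{s,t}) = 0$. Hence $q((f,s)(f',t)) = q(f,s)+q(f',t)$, and $q$ is a homomorphism.

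\textbf{Lift of $\pi$.} The subgroup $F \subseteq \Pi_X$ is the kernel of $\Pi_X \to \Gal(\bar{k}/k)$, which in these coordinates is $\{(f,1)\}$. Because $b$ is normalized, $b_{1,1} = 0$. Therefore $(f,1)(f',1) = (ff',1)$, so $f \mapsto (f,1)$ is the inclusion $F \hookrightarrow \Pi_X$. Composing with $q$ gives $q(f,1) = \pi(f)$, so $q$ restricts to $\pi$ on $F$ and is a lift of the projection.

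\textbf{Continuity.} The identification $\Pi_X = F \times \Gal(\bar{k}/k)$ is a homeomorphism onto the product topology. In that topology, $q$ is the composite of the first projection, which is continuous, with $\pi$, a map between finite discrete groups. So $q$ is continuous.

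The only delicate point is that \eqref{eq:GroupLawOnPiX} matches the topology and the embedding of $F$ as stated. This is taken from \cite[p.~1577]{DLA2019Reduction}, so the argument reduces to the computation above.
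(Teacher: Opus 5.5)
Your proposal is correct and follows essentially the same route as the paper: define $q((z,x,y),s)=(x,y)$ via the identification $\Pi_X = F \times \Gal(\bar{k}/k)$, and observe that the twisting term $b_{s,t}$ lies in $Z=\Ker(F\to M\oplus M)$, so it disappears under the projection. You also check explicitly that $q$ restricts to the projection on $F$ (using normalization of $b$) and that $q$ is continuous. The paper leaves these two points implicit, so your version is slightly more complete.
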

\begin{proof}
    We identify $\Pi_X$ to $F \times \Gal(\bar{k}/k)$ set-theoretically with the group law given by \eqref{eq:GroupLawOnPiX}. Also recall that $F = Z \times (M \oplus M)$ with the group law given by \eqref{eq:GroupLawOnF}. The map
        \begin{equation*}
            q: \Pi_X \to (M \oplus M), \quad ((z,x,y),s) \mapsto (x,y)
        \end{equation*}
    is a group homomorphism, because 
        \begin{equation*}
            ((z,x,y),s)((z',x',y'),t) = ((z,x,y)(z',x',y')(b_{s,t},0,0),st) = ((z+z'+\phi(x \otimes y')+b_{s,t},x+x',y+y',st)
        \end{equation*}
    for all $z,z' \in Z$, $x,y,x',y' \in M$, and $s,t \in \Gal(\bar{k}/k)$.
\end{proof}

Let $\Acal_L$ be the $2$-cocycle inflated from $\Psi_L$ along the homomorphism $q$ from Lemma \ref{lemm:TheHomomorphismQ}. Before proving that $[\Acal_L] \in \H^2(\Pi_X,\Fbf_p)$ yields an unramified class in $\H^2(\Pi_X,\Qbf/\Zbf(1)) \cong \Br(X)$, we study its evaluation at the local points of $X$.

\begin{prop} \label{prop:EvaluationOfAL}
    Let $X$ be the homogeneous space of $\SL_n$ constructed from the class $\beta \in \H^2(k,Z)$ as in the beginning of Subsection \ref{subsec:ConstructionLocalPoints}. Let $K \supseteq k$ be an overfield and assume 
        \begin{equation*}
            a = (x,y):\Gal(\bar{K}/K) \to M \oplus M = F/Z
        \end{equation*}
    is a cocycle such that $\beta|_K = \phi_\ast[x \cup y]$ in  $\H^2(K,Z)$. Let $\tensor[_a]{F}{}$ be the inner $K$-form of $F$ twisted by $a$, which fits in an extension
        \begin{equation} \label{eq:TwistedFormAF}
            0 \to Z \to \tensor[_a]{F}{} \to M \oplus M \to 0
        \end{equation}
    of finite $K$-group schemes. Then, the following are true.

    \begin{enumerate}
        \item \label{prop:EvaluationOfAL:1} $X_K$ is isomorphic to $\tensor[_a]{F}{} \backslash \SL_{n,K}$ (in particular, $X(K) \neq \varnothing$).

        \item \label{prop:EvaluationOfAL:2} Let $X(K) \to \H^1(K,\tensor[_a]{F}{})$ be the evaluation map associated with the $\tensor[_a]{F}{}$-torsor $\SL_{n,K} \to X_K$. Let $P \in X(K)$, let $\fbf: \Gal(\bar{K}/K) \to \tensor[_a]{F}{}$ be a cocycle representing its evaluation, and let $\abf = (\xbf,\ybf): \Gal(\bar{K}/K) \to M \oplus M$ be its pushforward along the projection $\tensor[_a]{F}{} \to M \oplus M$. Then, we have
            \begin{equation*}
                \beta|_K = \phi_\ast[(x+\xbf) \cup (y+\ybf)] \in \H^2(K,Z).
            \end{equation*}
        On the other hand, the value of the above class $[\Acal_L] \in \Br(X)$ when evaluated at $P$ is
            \begin{equation*}
                [\Acal_L](P) = L_\ast[(x + \xbf) \cup (y + \ybf)] \in \H^2(K,\Fbf_p) = \tensor[_p]{\Br}{}(K).
            \end{equation*}
    \end{enumerate}
\end{prop}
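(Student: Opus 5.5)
We will work throughout with the explicit description $\Pi_X = F \times \Gal(\bar{k}/k)$ and the group law \eqref{eq:GroupLawOnPiX}, restricted to $\Gal(\bar{K}/K)$ (equivalently, we base change to $K$ and replace $b$ by its restriction). The main tool is the standard dictionary between $K$-points of $X$ and continuous sections of $\Pi_{X_K} \to \Gal(\bar{K}/K)$: a point $P$ gives a section $\sigma_P$, well defined up to $F$-conjugacy, and the evaluation of any class of $\H^2(\Pi_X,\Qbf/\Zbf(1)) = \Br(X)$ at $P$ is its pullback along $\sigma_P$. This compatibility of \cite[Proposition 3.2]{LA2019Brauer} with evaluation is the point we will check most carefully.

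\emph{Part (1).} Since $b$ is normalized with values in $Z$, a map $s \mapsto ((z_s, x_s, y_s), s)$ is a homomorphic section of $\Pi_{X_K}\to \Gal(\bar{K}/K)$ exactly when $(x_s,y_s)=a$ is a cocycle and, by \eqref{eq:GroupLawOnF}, $z_{st} = z_s + z_t + \phi(x_s \otimes y_t) - b_{s,t}$. In other words, $\phi(x_s\otimes y_t) - b_{s,t}$ must be the coboundary of $z$, which is the cocycle-level form of $\beta|_K = \phi_\ast[x\cup y]$. The hypothesis therefore produces a section $\sigma$. Conjugation by $\sigma$ defines the Galois action on $F$, namely the inner twist $\tensor[_a]{F}{}$, so $\Pi_{X_K} \cong \tensor[_a]{F}{} \rtimes \Gal(\bar{K}/K)$ via $\sigma$. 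A homogeneous space of $\SL_n$ with finite stabilizer is determined by its fundamental group extension (this is the content of \cite[Proposition 3.3, Lemme 3.8]{Linh2024BK} and \cite{DLA2019Reduction}). Alternatively, $\sigma$ gives a $K$-point $P_0$ whose stabilizer is $\tensor[_a]{F}{}$, so $X_K \cong \tensor[_a]{F}{}\backslash \SL_{n,K}$ by transitivity.

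\emph{Part (2).} Other $K$-points correspond to sections $\sigma_P(s) = \fbf_s\,\sigma(s)$, where $\fbf$ is a cocycle with values in $\tensor[_a]{F}{}$ representing the evaluation class of $P$. This twisting description of sections is standard for $X_K = \tensor[_a]{F}{}\backslash\SL_{n,K}$ because $\H^1(K,\SL_n)=1$. We then project to $M\oplus M$ with $q$ from Lemma \ref{lemm:TheHomomorphismQ}. Since the twisted action on $M\oplus M = F/Z$ is trivial (the twist is inner and $F/Z$ is abelian), $q\circ\sigma_P = a + \abf = (x+\xbf, y+\ybf)$, and this is a cocycle. Applying the computation of Part (1) to the section $\sigma_P$ gives $\beta|_K = \phi_\ast[(x+\xbf)\cup(y+\ybf)]$. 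For the evaluation, $[\Acal_L](P) = \sigma_P^\ast q^\ast[\Psi_L] = [\Psi_L\circ (q\sigma_P, q\sigma_P)]$, and $\Psi_L((x_s,y_s),(x'_t,y'_t)) = L(x_s\otimes y'_t)$ is exactly $L_\ast$ of the cup-product cocycle $(x+\xbf)_s\otimes(y+\ybf)_t$. The identification $\Fbf_p=\mu_p$ via $\zeta_p$ then places the result in ${}_p\Br(K)$.

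The main obstacle will be making the sign and normalization conventions consistent. These include the $b^{-1}$ in \eqref{eq:GroupLawOnPiX}, the order of factors in $\fbf_s\sigma(s)$, and the left quotient in $\tensor[_a]{F}{}\backslash\SL_n$. They matter because a wrong convention replaces $x\cup y$ by $-x\cup y$ or by $y\cup x$. We will fix the convention through the Part (1) computation, chosen so that it recovers the criterion $\beta|_K=\phi_\ast(x\cup y)$ already quoted from \cite{Linh2024BK}.
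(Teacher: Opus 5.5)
Your proposal is correct. For the evaluation formula it is the paper's argument in different notation. Your section $\sigma(s)=((z_s,x_s,y_s),s)$ is the paper's $s\mapsto(\tilde a_s,\pi(s))$ after the correction $\tilde a_s\mapsto\tilde a_s c_s$, with $c=z$. Your $\sigma_P(s)=\fbf_s\sigma(s)$ is exactly $\omega(\fbf_s,s)$. So pulling back $q^\ast\Psi_L$ yields the same cocycle $L((x+\xbf)_s\otimes(y+\ybf)_t)$.

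Where you genuinely differ is the identity $\beta|_K=\phi_\ast[(x+\xbf)\cup(y+\ybf)]$. The paper derives it from Serre's twisting formula $\Delta([a+\abf])=\tensor[_a]{\Delta}{}([\abf])+\Delta([a])$, together with $\tensor[_a]{\Delta}{}([\abf])=0$ because $[\abf]$ lifts to $\H^1(K,\tensor[_a]{F}{})$. You instead rerun your splitting computation on the homomorphic section $\sigma_P$, using $q\circ\sigma_P=a+\abf$. Your route is more elementary: it uses only the two group laws and the same section serves both claims. The paper's route is purely formal and never needs the explicit model of $\Pi_X$.

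Two of your justifications should be tightened, though neither breaks the argument.
\begin{itemize}
\item The relation $\sigma_P=\fbf\cdot\sigma$ is the general comparison between the section attached to a point and the class of the fibre of the torsor $\SL_{n,K}\to X_K$; the paper cites \cite[\S{2.2}]{Lagarde2025Brauer} for it. It does not come from $\H^1(K,\SL_n)=1$, which only guarantees that every class in $\H^1(K,\tensor[_a]{F}{})$ comes from a point.
\item The claim that a homogeneous space of $\SL_n$ with finite stabilizer is determined by its fundamental group extension is too strong. Non-conjugate embeddings of the same constant finite group give non-isomorphic homogeneous spaces with the same split extension.
\end{itemize}
For (1), either cite \cite[Proposition 3.3]{Linh2024BK} as the paper does, or flesh out your section-gives-a-point argument. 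The open subgroup $\sigma(\Gal(\bar K/K))$ cuts out a cover of $X_K$ to which the $\SL_n$-action lifts, making it an $\SL_{n,K}$-torsor, hence trivial by $\H^1(K,\SL_n)=1$. That version also pins down the base point as the point attached to $\sigma$, which settles the convention issues you raise.
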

\begin{proof}
    \ref{prop:EvaluationOfAL:1} is just Proposition 3.3 of \cite{Linh2024BK}. We prove the claim about $\beta_K$ in \ref{prop:EvaluationOfAL:2}. By \cite[Lemme 3.8]{Linh2024BK}, the connecting map $\Delta: \H^1(K,M \oplus M) \to \H^2(K,Z)$ associated with the extension
        \begin{equation*}
            0 \to Z \to F \to M \oplus M \to 0
        \end{equation*}
    takes $[a]$ to $\phi_\ast[x \cup y]$ and $[a + \abf]$ to $\phi_\ast[(x + \xbf) \cup (y + \ybf)]$. On the other hand, the connecting map $\tensor[_a]{\Delta}{}: \H^1(K,M \oplus M) \to \H^2(K,Z)$ associated with \eqref{eq:TwistedFormAF}, takes $[\abf]$ to $0$ because $[\abf]$ comes from $[\fbf] \in \H^1(K,\tensor[_a]{F}{})$. By the ``twisting formula'' for connecting maps  \cite[Chapitre I, Proposition 44]{Serre1994Galois}, one has
        \begin{equation*}
            \phi_\ast[(x + \xbf) \cup (y + \ybf)] = \Delta([a + \abf]) = \tensor[_a]{\Delta}{}([\abf]) + \Delta([a]) = \phi_\ast[x \cup y] = \beta|_K.
        \end{equation*}
    We now compute the evaluation $[\Acal_L](P)$ by a direct cocycle computation. First of all, the choice of an embedding $\bar{k} \hookrightarrow \bar{K}$ extension $k \hookrightarrow K$ yields a homomorphism $\pi: \Gal(\bar{K}/K) \to \Gal(\bar{k}/k)$. The condition $\beta|_K = \phi_\ast[x \cup y]$ can be rewritten as
        \begin{equation*}
            b_{\pi(s),\pi(t)} = \phi(x_s \otimes y_t)c_sc_tc_{st}^{-1}
        \end{equation*}
    for some cochain $c: \Gal(\bar{K}/K) \to Z$. Put $\tilde{a}_s:=(0,a_s) \in F$, then $\phi(x_s \otimes y_t) = \tilde{a}_s \tilde{a}_t \tilde{a}_{st}^{-1}$, so that
        \begin{equation*}
            b_{\pi(s),\pi(t)} = (\tilde{a}_s c_s) (\tilde{a}_t c_t) (\tilde{a}_{st} c_{st})^{-1}
        \end{equation*}
    for all $s,t \in \Gal(\bar{K}/K)$, see \cite[p. 366]{Linh2024BK} (note that $F$ is finite constant and $Z$ is central in $F$). Upon replacing $\tilde{a}_s$ by $\tilde{a}_s c_s$, we may assume
        \begin{equation}
            b_{\pi(s),\pi(t)} = \tilde{a}_s \tilde{a}_t \tilde{a}_{st}^{-1} = \tilde{a}_{st}^{-1} \tilde{a}_s \tilde{a}_t.
        \end{equation}
    Since the action of $\Gal(\bar{K}/K)$ on $\tensor[_a]{F}{}$ is given by $s \cdot f= \tilde{a}_s f \tilde{a}_s^{-1}$, one checks that the map
        \begin{equation*}
            \omega: \tensor[_a]{F}{} \rtimes \Gal(\bar{K}/K) \to \Pi_X, \quad (f,s) \mapsto (f \tilde{a}_s,\pi(s))
        \end{equation*}
    is a group homomorphism (the group law on $\Pi_X = F \times \Gal(\bar{k}/k)$ being given by \eqref{eq:GroupLawOnPiX}) fitting in a commutative diagram of extensions
        \begin{equation} \label{eq:MorphismOfExtensions}
        \xymatrix{
            1 \ar[r] & \tensor[_a]{F}{} \ar[r] \ar@{=}[d] & \tensor[_a]{F}{} \rtimes \Gal(\bar{K}/K) \ar[r] \ar[d]^{\omega} & \Gal(\bar{K}/K) \ar[r] \ar[d]^\pi & 1 \\
            1 \ar[r] & F \ar[r] & \Pi_X \ar[r] & \Gal(\bar{k}/k) \ar[r] & 1.
        }
        \end{equation}
    The $K$-point $P \in X(K)$ gives rise to the section
       \begin{equation*}
            \sigma_s = (\fbf_s,s)
        \end{equation*}
    of the top row of \eqref{eq:MorphismOfExtensions} (see {\em e.g.} \cite[\S{2.2}]{Lagarde2025Brauer}). Now, according to \cite[Proposition 3.4]{LA2019Brauer}, the evaluation $[\Acal_L](P) \in \H^2(K,\Fbf_p)$ is represented by the pullback 
        \begin{equation*}
            \Bcal = \omega^\ast \sigma^\ast \Acal_L = \omega^\ast \sigma^\ast q^\ast \Psi_L: \Gal(\bar{K}/K) \times \Gal(\bar{K}/K) \to \Fbf_p,
        \end{equation*}
    where $q$ is the homomorphism from Lemma \ref{lemm:TheHomomorphismQ}. For $s,t \in \Gal(\bar{K}/K)$, we compute
        \begin{align*}
            \Bcal(s,t) & = \Psi_L(q(\omega(\sigma_s)), q(\omega(\sigma_t))) = \Psi_L(q(\fbf_s \tilde{a}_s,\pi(s)), q(\fbf_t \tilde{a}_t,\pi(t))) = \Psi_L(\abf_s + a_s, \abf_t + a_t) \\
            & = L((\xbf_s + x_s) \otimes (\ybf_t + y_t)),
        \end{align*}
    so $[\Acal_L](P) = L_\ast([x + \xbf] \cup [y + \ybf])$ as claimed.
\end{proof}

\subsection{Explicit computation of Brauer--Manin obstruction} \label{subsec:TransBrauerExplicit}

So far, we have constructed a class $\Acal_L \in \Br(X)$ from any given linear form $L: M \otimes M \to \Fbf_p$, but we have not established its unramified nature. The trick here is to not prove this algebraically, but arithmetically using Harari's evaluative criterion for ramification.

We recall that the class $\beta \in \H^2(k,Z)$ and the places $v_1,\ldots,v_d \in \Omega$ were constructed such that $\beta|_{k_v} = 0$ for $v \neq v_1,\ldots,v_d$ and
    \begin{equation*}
        \Inv_{v_i}^Z(\beta) = \phi(e_i \otimes e_i)
    \end{equation*}
for $1 \le i \le d$, where $e_1,\ldots,e_d$ is the standard basis for $M = \Fbf_p^d$, 

\begin{prop} \label{prop:Unramified}
    Let $[\Acal_L] \in \Br(X)$ be the class constructed from any linear form $L: M \otimes M = \Mbf_d(\Fbf_p) \to \Fbf_p$ as in Subsection \ref{subsec:Evaluation}. Then, the following are true.
    \begin{enumerate}
        \item \label{prop:Unramified:1} For any place $v \in \Omega$ and any $P_v \in X(k_v)$, one has
            \begin{equation*}
                \inv_v([\Acal_L](P_v)) = \begin{cases}
                     L(e_i \otimes e_i) & \text{if } v = v_i, \quad 1 \le i \le d, \\
                     0 & \text{otherwise}
                \end{cases}
            \end{equation*}
        (here, $\inv_v$ takes value in $\Zbf/p$).
        
        \item \label{prop:Unramified:2} $[\Acal_L]$ is unramified.
    \end{enumerate}
\end{prop}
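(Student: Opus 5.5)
For part \ref{prop:Unramified:1}, I would fix a place $v$ and a point $P_v \in X(k_v)$, and apply Proposition \ref{prop:EvaluationOfAL} with $K = k_v$. By Lemma \ref{lemm:LocalPoints} we may pick a cocycle $a=(x,y)$ with $\beta|_{k_v} = \phi_\ast[x\cup y]$. Then Proposition \ref{prop:EvaluationOfAL}\ref{prop:EvaluationOfAL:2} produces classes $x' = x+\xbf$ and $y' = y+\ybf$ in $\H^1(k_v,M)$ such that
\begin{equation*}
\beta|_{k_v} = \phi_\ast(x'\cup y') \quad\text{and}\quad [\Acal_L](P_v) = L_\ast(x'\cup y').
\end{equation*}
For $v$ archimedean the local groups vanish, since $k$ is totally imaginary, so the invariant is $0$. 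For $v$ finite, I would set $A_v := \Inv_v^{M\otimes M}(x'\cup y')$. Functoriality \eqref{eq:LocalInvariantFunctorial}, applied to $L: M\otimes M \to \Fbf_p$, gives $\inv_v([\Acal_L](P_v)) = L(A_v)$. The same functoriality applied to $\phi$ gives $\phi(A_v) = \Inv_v^Z(\beta)$. This is the situation of the proof of Proposition \ref{prop:NoRationalPoints}, except that the classes are now local rather than global. The rank argument there is purely local: $A_v$ factors through $\H^1(k_v,\Fbf_p)$, so $\operatorname{rank} A_v \le r_v < d$, with $r_v = 2$ at $v = v_i$. That argument therefore applies verbatim. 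It gives $A_{v_i} = e_i\otimes e_i$, and $A_v = 0$ for $v\notin\{v_1,\dots,v_d\}$. Hence the invariant equals $L(e_i\otimes e_i)$ at $v_i$ and $0$ elsewhere. In particular it is independent of $P_v$.

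For part \ref{prop:Unramified:2}, I would use Harari's criterion \cite{Harari1994Fibration}. For a class $\alpha\in\Br(X)$ on a smooth variety over a number field, if $\alpha$ is ramified then there are infinitely many places $v$ such that $P_v \mapsto \inv_v(\alpha(P_v))$ is non-constant on $X(k_v)$ (Harari's formal lemma / "lemme de Harari"). Part \ref{prop:Unramified:1} shows that the evaluation of $[\Acal_L]$ is constant on $X(k_v)$ for every place $v$. Consequently $[\Acal_L]$ must be unramified.

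The main obstacle is to make sure Harari's criterion applies in the form needed. We need $X(k_v)\neq\varnothing$ for all $v$, which Lemma \ref{lemm:LocalPoints} provides. We also need the precise statement: if $\alpha\notin\Br_{\nr}(X)$, then for infinitely many $v$ the evaluation map on $X(k_v)$ takes a nonzero value. It suffices to note that the non-constant version implies ours. The criterion is usually stated for a ramified $\alpha$ with respect to a smooth compactification. Since $X$ is a smooth geometrically integral variety with local points everywhere, it applies after passing to such a compactification $X^c$ of $X$, via purity.

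A secondary check is that the rank bound really only uses local information. The factorization $A_v = B_v^T[\inv_v(w_i\cup w_j)]C_v$ in the proof of Proposition \ref{prop:NoRationalPoints} only needs $x',y'\in\H^1(k_v,M)$, so this holds.
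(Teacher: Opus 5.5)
Your proposal is correct and follows essentially the same route as the paper. You apply Proposition~\ref{prop:EvaluationOfAL} at $K=k_v$, use the local rank bound from Proposition~\ref{prop:NoRationalPoints} to force $\Inv_v^{M\otimes M}(x'\cup y')$ to be $e_i\otimes e_i$ or $0$, read off the invariant via the functoriality \eqref{eq:LocalInvariantFunctorial} for $L$, and conclude with Harari's criterion. The only difference is that you invoke the non-constancy form of Harari's theorem, while the paper uses the fact that the evaluation vanishes outside $\{v_1,\dots,v_d\}$; part \ref{prop:Unramified:1} gives both properties, so either form works.
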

\begin{proof}
    We prove \ref{prop:Unramified:1}. By Proposition \ref{prop:EvaluationOfAL}, there exists $x'_v, y'_v \in \H^1(k_v,\Fbf_p)$ such that 
        \begin{equation*}
            [\Acal_L](P_v) = L_\ast(x'_v \cup y'_v)
        \end{equation*}
    and $\phi_\ast(x'_v \cup y_v') = \beta|_{k_v}$. If $v$ is a finite place, let $A_v':=\Inv_v^{M \otimes M}(x'_v \cup y'_v) \in M \otimes M$ (the isomorphism $\Inv_v^{M \otimes M}$ being defined from \eqref{eq:LocalInvariantA}). We can now repeat the rank-bounding argument from the proof of Proposition \ref{prop:NoRationalPoints}. Namely, $A'_v$ has small rank (bounded uniformly in $v$). On the the hand, it follows from the functoriality \eqref{eq:LocalInvariantFunctorial} that
        \begin{equation*}
            \phi(A'_v) = \phi(\Inv_v^{M \otimes M}(x'_v \cup y'_v)) = \Inv_v^Z(\phi_\ast(x'_v \cup y'_v)) = \Inv_v^Z(\beta) = \begin{cases}
                     \phi(e_i \otimes e_i) & \text{if } v = v_i, \quad 1 \le i \le d, \\
                     0 & \text{otherwise}. 
                \end{cases}
        \end{equation*}
    For sufficiently large choice of $d$ ($d = [k:\Qbf] + 3$ will do the trick), the rank bound on $A'_v$ forces
        \begin{equation*}
            A'_v = \begin{cases}
                     e_i \otimes e_i & \text{if } v = v_i, \quad 1 \le i \le d, \\
                     0 & \text{otherwise}. 
                \end{cases}
        \end{equation*}
    The claim \ref{prop:Unramified:1} then follows from the functoriality condition \eqref{eq:LocalInvariantFunctorial}. In particular, the evaluation map induced by $[\Acal_L]$ is zero for all but finitely many places $v \in \Omega$. By \cite[Th\'eor\`eme 2.1.1]{Harari1994Fibration}, we have $[\Acal_L] \in \Br_{\nr}(X)$, which proves \ref{prop:Unramified:2}. 
\end{proof}

With Proposition \ref{prop:Unramified} at our disposal, we are now ready to produce unramified Brauer classes which obstructs the Brauer--Manin condition. Let $L: M \otimes M = \Mbf_d(\Fbf_p) \to \Fbf_p$ be the $(1,1)$-entry and $[\Acal_L] \in \Br_{\nr}(X)$ the corresponding element. Then, for any faimly $(P_v) \in \prod_{v \in \Omega} X(K_v)$ of local points, one has
    \begin{equation*}
        \sum_{v \in \Omega} \inv_v([\Acal_L](P_v)) = \sum_{i=1}^d L(e_i \otimes e_i) = 1 \in \Zbf/p
    \end{equation*}
In particular, $[\Acal_L]$ is transcendental, because $\Br_{\nr,1}(X) = \Br_0(X)$ has no contribution to the Brauer--Manin condition.

\begin{remk}
    Here is an alternative way to see the transcendence of $[\Acal_L]$. Recall from Subsection \ref{subsec:UnramifiedClass} that the restriction of $[\Acal_L]$ to $\Br(X_{\bar{k}}) = \H^2(F,\Qbf/\Zbf)$ is represented by the cocycle
        \begin{equation*}
            \bar{\Acal}_L: F \times F \to \Fbf_p,
        \end{equation*}
    which itself is the inflation of bilinear map
        \begin{equation*}
            \Psi_L: (M \oplus M) \times (M \oplus M) \to \Fbf_p, \quad ((x,y),(x',y')) \mapsto L(x \otimes y').
        \end{equation*}
    Under the isomorphism 
        \begin{equation*}
            \H^2(M \oplus M, \Qbf/\Zbf) \cong \Hom(\bigwedge^2 (M \oplus M), \Qbf/\Zbf) = \Hom((\bigwedge^2 M) \oplus (M \otimes M) \oplus (\bigwedge^2 M), \Qbf/\Zbf),
        \end{equation*}
    the class of $\Psi_L$ corresponds to that of
        \begin{equation*}
            \tilde{\Psi}_L: (\bigwedge^2 M) \oplus (M \otimes M) \oplus (\bigwedge^2 M) \to \Fbf_p, \quad (\gamma_1, \gamma_2, \gamma_3) \mapsto L(\gamma_2).
        \end{equation*}
    Recall from \eqref{eq:CommutatorMapLambda} that the commutator map $\lambda: (\bigwedge^2 M) \oplus (M \otimes M) \oplus (\bigwedge^2 M) \to Z$ has kernel 
        \begin{equation*}
            \Ker(\lambda) = (\bigwedge^2 M) \oplus \Ker (\phi) \oplus (\bigwedge^2 M),
        \end{equation*}
    and if $S_\lambda:=(\bigwedge^2 M) \oplus \{0\} \oplus (\bigwedge^2 M)$, then the Bogomolov multiplier of $F$ is 
        \begin{equation*}
            B_0(F) = \Hom(\Ker(\lambda)/S_\lambda,\Qbf/\Zbf) =  \Hom(\Ker(\phi),\Qbf/\Zbf)
        \end{equation*}
    {\em cf.} the proof of \cite[Lemme 4.8 and Proposition 4.9]{Linh2024BK}. Now, $\Ibf_d \in \Ker(\lambda)$ but our choice of the functional $L$ give $L(\Ibf_d) = 1 \neq 0$, so $\tilde{\Psi}_L$ remains nonzero when restricted to the subgroup $\Ker(\phi)$ of the second factor $M \otimes M$. Thus, $[\bar{\Acal}_L]$ is a nonzero element of $\Br_{\nr}(X_{\bar{k}})$, that is, $[\Acal_L]$ is a transcendental Brauer class.
\end{remk}